 \newtheorem{theorem}{Theorem}
 \newtheorem{lemma}[theorem]{Lemma}
\theoremstyle{definition}
\theoremstyle{remark}
\begin{document}
\title[Berezin-Toeplitz operators]{Bounded Berezin-Toeplitz operators \\ on the Segal-Bargmann space}
\author[H.~Chihara]{Hiroyuki Chihara}
\address{Mathematical Institute,  
         Tohoku University, 
         Sendai 980-8578, Japan}
\email{chihara@math.tohoku.ac.jp}
\begin{abstract}
We discuss 
the boundedness of Berezin-Toeplitz operators 
on a generalized Segal-Bargmann space (Fock space) 
over the complex $n$-space. 
This space is characterized 
by the image of a global Bargmann-type transform 
introduced by Sj\"ostrand. 
We also obtain the deformation estimates of 
the composition of Berezin-Toeplitz operators 
whose symbols and their derivatives up to order three 
are in the Wiener algebra of Sj\"ostrand. 
Our method of proofs is based 
on the pseudodifferential calculus 
and the heat flow determined by  
the phase function of the Bargmann transform. 
\end{abstract}
\thanks{Supported by the JSPS Grant-in-Aid for Scientific Research \#20540151.} 
\subjclass{Primary 47B35; Secondary 47B32, 47G30}
\keywords{Bargmann transform, 
Segal-Bargmann space, 
Berezin-Toeplitz operator, 
pseudodifferential operator}
\maketitle
%
\section{Introduction}
\label{section:introduction}
We study the boundedness and the deformation estimates of
Berezin-Toeplitz operators on a generalized Segal-Bargmann space (Fock
space) introduced by Sj\"ostrand in \cite{sjoestrand3}. 
This space is a reproducing kernel Hilbert space of 
square-integrable holomorphic functions on
the complex $n$-space, and is characterized 
by the image of a global Bargmann-type transform. 
We begin with a review of Sj\"ostrand's ``linear'' theory in
\cite{sjoestrand3} to introduce the setting of the present paper. 
Let $\phi(X,Y)$ be a quadratic form of
$(X,Y)\in\mathbb{C}^n\times\mathbb{C}^n$ 
of the form
$$
\phi(X,Y)
=
\frac{1}{2}\langle{X,AX}\rangle
+
\langle{X,BY}\rangle
+
\frac{1}{2}\langle{Y,CY}\rangle,
$$
where $A$, $B$ and $C$ are complex $n\times{n}$ matrices, 
${}^tA=A$, ${}^tC=C$ 
and 
$\langle{X,Y}\rangle=X_1Y_1+\dotsb+X_nY_n$ 
for 
$X=(X_1,\dotsc,X_n)$ and $Y=(Y_1,\dotsc,Y_n)$. 
Set $i=\sqrt{-1}$, $C_R=(C+\bar{C})/2$ and $C_I=(C-\bar{C})/{2i}$. 
We denote by $I_n$ the $n{\times}n$ identity matrix. 
Assume that 
\begin{align}
  \det\phi^{\prime\prime}_{XY}
& =
  \det{B}\ne0,
\label{equation:phi1}
\\
  \operatorname{Im}\hspace{1pt}
  \phi^{\prime\prime}_{YY}
& =
  C_I>0.
\label{equation:phi2}
\end{align}
We remark that 
$\det{C}=\det{C_I}\det(C_I^{-1/2}C_RC_I^{-1/2}+iI_n)\ne0$ 
since $C_I^{-1/2}C_RC_I^{-1/2}$ is a real symmetric matrix. 
Let $h\in(0,1]$ be a semiclassical parameter, 
and let $\mathscr{S}(\mathbb{R}^n)$ be the Schwartz class on $\mathbb{R}^n$. 
A global Bargmann-type transformation of
$u\in\mathscr{S}(\mathbb{R}^n)$ is defined by 
$$
Tu(X)
=
C_\phi
h^{-3n/4}
\int_{\mathbb{R}^n}
e^{i\phi(X,y)/h}
u(y)
dy,
$$
where $C_\phi$ is a normalizing constant as
$$
C_\phi
=
2^{-n/2}\pi^{-3n/4}
\lvert\det{B}\rvert
(\det{C_I})^{-1/4}.
$$
\par
The assumption \eqref{equation:phi2} guarantees the existence of 
a function 
\begin{align*}
  \Phi(X)
& =
  \max_{y\in\mathbb{R}^n}
  \{-\operatorname{Im}\hspace{1pt}\phi(X,y)\}
\\
& =
  \frac{1}{2}
  \langle
  \operatorname{Im}\hspace{1pt}({}^tBX),C_I^{-1}\operatorname{Im}\hspace{1pt}({}^tBX)
  \rangle
  -
  \frac{1}{2}
  \operatorname{Im}\hspace{1pt}\langle{X,AX}\rangle
\\
& =
  \langle{X,\Phi^{\prime\prime}_{X\bar{X}}\bar{X}}\rangle
  +
  \operatorname{Re}\hspace{1pt}\langle{X,\Phi^{\prime\prime}_{XX}X}\rangle,
\end{align*}
$$
\Phi^{\prime\prime}_{X\bar{X}}
=
\frac{BC_I^{-1}{}^t\bar{B}}{4}>0, 
\quad
\Phi^{\prime\prime}_{XX}
=
-
\frac{BC_I^{-1}{}^tB}{4}-\frac{A}{2i}.
$$
We denote the Lebesgue measure on $\mathbb{C}^n$ by $L$. 
Set $\lvert{X}\rvert=\sqrt{\langle{X,\bar{X}}\rangle}$ 
for $X\in\mathbb{C}^n$. 
Let $L^2_\Phi$ be the set of all square-integrable functions on
$\mathbb{C}^n$ with respect to $e^{-2\Phi(X)/h}L(dX)$, 
and let $H_\Phi$ be the set of all holomorphic functions in $L^2_\Phi$. 
We remark that 
$$
\operatorname{Re}\hspace{1pt}\{i\phi(X,y)\}
=
\Phi(X)
-
\frac{1}{2}
\lvert
C_I^{1/2}
(y+C_I^{-1}\operatorname{Im}\hspace{1pt}({}^tBX))
\rvert^2. 
$$
The Bargmann transform $T$ is well-defined for any tempered distribution 
$u\in\mathscr{S}^\prime(\mathbb{R}^n)$. 
Moreover $Tu$ satisfies 
$e^{-\Phi(X)/h}Tu(X)\in\mathscr{S}^\prime(\mathbb{C}^n)$, 
and is holomorphic on $\mathbb{C}^n$. 
In particular, 
$T$ gives a Hilbert space isomorphism 
of $L^2(\mathbb{R}^n)$ onto $H_\Phi$, 
where $L^2(\mathbb{R}^n)$ is the set of 
all Lebesgue square-integrable functions on $\mathbb{R}^n$. 
We here remark that 
$e^{-\Phi(X)/h}T(\mathscr{S}(\mathbb{R}^n))\subset\mathscr{S}(\mathbb{C}^n)$, 
and $T(\mathscr{S}(\mathbb{R}^n))$ is densely embedded in 
$H_\Phi$ and $T(\mathscr{S}^\prime(\mathbb{R}^n))$ respectively 
since $\mathscr{S}(\mathbb{R}^n)$ is densely embedded in 
$L^2(\mathbb{R}^n)$ and $\mathscr{S}^\prime(\mathbb{R}^n)$ respectively. 
The Bargmann transform $T$ is interpreted as a 
Fourier integral operator associated with a linear canonical transform 
$$
\kappa_T:
\mathbb{C}^n\times\mathbb{C}^n
\ni
(Y,-\phi^\prime_Y(X,Y))
\mapsto
(X,\phi^\prime_X(X,Y))
\in
\mathbb{C}^n\times\mathbb{C}^n,
$$
$$
\kappa_T(x,\xi)=(-{}^tB^{-1}(Cx+\xi),Bx-A{}^tB^{-1}(Cx+\xi)).
$$
If we set
$$
\Lambda_\Phi
=
\left\{
\left(X,\frac{2}{i}\frac{\partial\Phi}{\partial{X}}(X)\right)
\ \Bigg\vert\ 
X\in\mathbb{C}^n
\right\}, 
$$
then $\Lambda_\Phi=\kappa_T(\mathbb{R}^{2n})$. 
This means that the singularities of $u\in\mathscr{S}^\prime(\mathbb{R}^n)$ 
described in the phase space $\mathbb{R}^{2n}$ 
are translated into those of $Tu$ 
described in the Lagrangian submanifold $\Lambda_\Phi$. 
\par
Let $\Psi(X,Y)$ be a holomorphic quadratic function on 
$\mathbb{C}^n\times\mathbb{C}^n$ defined by 
the critical value of 
$-\{\phi(X,Z)-\overline{\phi(\bar{Y},\bar{Z})}\}/2i$ 
for $Z\in\mathbb{C}^n$, that is, 
$$
\Psi(X,Y)
=
\langle{X,\Phi^{\prime\prime}_{X\bar{X}}Y}\rangle
+
\frac{1}{2}\langle{X,\Phi^{\prime\prime}_{XX}X}\rangle
+
\frac{1}{2}\langle{Y,\overline{\Phi^{\prime\prime}_{XX}}Y}\rangle.
$$
Note that $\Psi(X,\bar{X})=\Phi(X)$. 
$TT^\ast$ is an orthogonal projector of $L^2_\Phi$ onto $H_\Phi$, 
and given by 
\begin{equation}
TT^\ast{u}(X)
=
\frac{C_\Phi}{h^n}
\int_{\mathbb{C}^n}
e^{[2\Psi(X,\bar{Y})-2\Phi(Y)]/h}
u(Y)L(dY),
\label{equation:projector} 
\end{equation}
$$
C_\Phi
=
\left(\frac{2}{\pi}\right)^n
\det(\Phi^{\prime\prime}_{X\bar{X}})
=
(2\pi)^{-n}\lvert\det{B}\rvert^2(\det{C_I})^{-1}.
$$
\par
Here we state the definition of Berezin-Toeplitz operators on $H_\Phi$. 
If we set $R=C_I^{-1/2}{}^tB/2$, then $R^\ast{R}=\Phi^{\prime\prime}_{\bar{X}X}$. 
Let $\mathscr{T}$ be a class of symbols defined by 
$$
\mathscr{T}
=
\left\{
b(X)
\ \Bigg\vert\ 
\int_{\mathbb{C}^n}
e^{-2\lvert{R(X-Y)}\rvert^2/h}
\lvert{b(Y)}\rvert^2
L(dY)
<\infty
\ \text{for any}\ 
X\in\mathbb{C}^n
\right\}.
$$
A Berezin-Toeplitz operator $\tilde{T}_b$ associated with a symbol 
$b\in\mathscr{T}$ is defined by $\tilde{T}_bu=TT^\ast(bu)$ for $u{\in}H_\Phi$. 
Since 
$$
\operatorname{Re}\hspace{1pt}
\{2\Psi(X,\bar{Y})-2\Phi(Y)\}
=
\Phi(X)-\Phi(Y)-\lvert{R(X-Y)}\rvert^2,
$$
$e^{-\Phi(X)/h}\tilde{T}_bu(X)$ takes a finite value for each $X\in\mathbb{C}^n$ 
provided that $u{\in}L^2_\Phi$ and $b\in\mathscr{T}$. 
Historically, Berezin introduced this type of operators acting 
on a class of holomorphic functions over some complex spaces or manifolds, 
and established the foundation of geometric quantization 
in his celebrated paper \cite{berezin}. 
Properties of such operators and related problems 
on the usual Segal-Bargmann space have been investigated 
in several papers. 
See 
\cite{BC1}, 
\cite{BC2}, 
\cite{coburn1}, 
\cite{coburn2}, 
\cite{coburn3}, 
\cite{stroethoff} 
and references therein. 
\par
Here we give two examples of $H_\Phi$. 
\begin{description}
\item[{\bf Example 1}] 
If 
$\phi(X,Y)=i\beta(X^2/2-2XY+Y^2)$, 
$\beta>0$ 
and 
$XY=\langle{X,Y}\rangle$, 
then $H_\Phi$ is the usual Segal-Bargmann space (the Fock space), 
and 
$$
\Psi(X,\bar{Y})=\frac{\beta}{2}X\bar{Y}, 
\quad
\kappa_T(x,\xi)
=
\left(
x-\frac{i}{2\beta}\xi,
-i\beta\left(x+\frac{i}{2\beta}\xi\right)
\right).
$$
It is remarkable that $\Phi(X)=\beta\lvert{X}\rvert^2/2$ is strictly convex and 
$\Phi^{\prime\prime}_{XX}=0$ in this case. 
The strict convexity justifies 
the change of quantization parameter. 
See \cite[Proposition~1.3]{sjoestrand3}. 
These facts are effectively used in the analysis on the usual Segal-Bargmann space. 
See e.g., \cite{folland} for the detail. 
\item[{\bf Example 2}] 
If we set $\phi(X,Y)=i(X-Y)^2/2$, 
then $T$ is the heat kernel transform, and 
$$
\Psi(X,\bar{Y})
=
-\frac{(X-\bar{Y})^2}{8}, 
\quad
\Phi(X)
=
\frac{(\operatorname{Im}\hspace{1pt}X)^2}{2}, 
\quad
\kappa_T(x,\xi)=(x-i\xi,\xi).
$$
In this case, the global FBI transform $e^{-\Phi(X)/h}T$ 
and the space $H_\Phi$ are used as strong tools for 
microlocal and semiclassical analysis of 
linear differential operators on $\mathbb{R}^n$. 
See \cite{martinez} for the detail. 
\end{description}
\par
The purpose of the present paper is 
to study the boundedness and the deformation estimates 
of Berezin-Toeplitz operators on the generalized 
Segal-Bargmann space $H_\Phi$. 
To state our results, we introduce notation and 
review pseudodifferential calculus on $H_\Phi$ developed in \cite{sjoestrand3}. 
\par
We denote by $\mathscr{L}(H_\Phi)$ the set of 
all bounded linear operators of $H_\Phi$ to $H_\Phi$, 
and set 
$$
Q(a,b)
=
\left\langle
\frac{\partial{a}}{\partial{X}},
(\Phi^{\prime\prime}_{\bar{X}X})^{-1}
\frac{\partial{b}}{\partial\bar{X}}
\right\rangle,
\quad
\{a,b\}
=
iQ(a,b)-iQ(b,a)
$$
for $a,b{\in}C^1(\mathbb{C}^n)$. 
Pick up $\chi\in\mathscr{S}(\mathbb{C}^n)$ such that 
$\int_{\mathbb{C}^n}\chi(X)L(dX)\ne0$. 
Sj\"ostrand's Wiener algebra $S_\text{W}(\mathbb{C}^n)$ 
is the set of all tempered distributions on $\mathbb{C}^n$ satisfying 
\begin{equation}
U(\zeta;b)
=
\sup_{Z\in\mathbb{C}^n}
\lvert
\mathscr{F}[u\tau_Z\chi](\zeta)
\rvert
\in
L^1(\mathbb{C}^n_\zeta), 
\label{equation:sjoestrand}
\end{equation}
where $\mathscr{F}$ is the usual (not semiclassical) 
Fourier transform on $\mathbb{C}^n\simeq\mathbb{R}^{2n}$, 
$\tau_Z\chi(X)=\chi(X-Z)$, 
and $L^1(\mathbb{C}^n)$ is the set of 
all Lebesgue integrable functions on $\mathbb{C}^n$. 
Set 
$\lVert{b}\rVert_{S_\text{W}}=\lVert{U(\cdot;b)}\rVert_{L^1(\mathbb{C}^n)}$. 
We also denote by $L^\infty(\mathbb{C}^n)$ 
the set of all essentially bounded functions on $\mathbb{C}^n$. 
The definition of $S_\text{W}(\mathbb{C}^n)$ is independent of the
choice of $\chi$, 
and $S_\text{W}(\mathbb{C}^n)$ is invariant 
under linear transforms on $\mathbb{C}^n$. 
It is remarkable that 
$$
\mathscr{B}^{2n+1}(\mathbb{C}^n)
\subset
S_\text{W}(\mathbb{C}^n) 
\subset
\mathscr{B}^0(\mathbb{C}^n),
$$
and the Weyl quantization of any element of $S_\text{W}$ 
is a bounded linear operator. 
Set $\mathbb{N}_0=\{0,1,2,\dotsc\}$ for short. 
$\mathscr{B}^k(\mathbb{C}^n)$, $k\in\mathbb{N}_0$ 
is the set of all bounded $C^k$-functions on $\mathbb{C}^n$ 
whose derivatives of any order up to $k$ are also bounded on $\mathbb{C}^n$. 
\par
Next we introduce the Weyl quantization on $H_\Phi$. 
For fixed $X\in\mathbb{C}^n$, set 
$$
\Gamma(X)
=
\left\{
(Y,\theta) 
\ \Bigg\vert\ 
Y\in\mathbb{C}^n, \ 
\theta
=
\frac{2}{i}\frac{\partial\Phi}{\partial{X}}
\left(\frac{X+Y}{2}\right)
\right\}, 
$$
and a volume of $\Gamma(X)$ is defined by  
$d\Omega
=
dY_1\wedge\dotsb\wedge{dY_n}\wedge{d\theta_1}\wedge\dotsb\wedge{d\theta_n}$. 
For $u{\in}H_\Phi$, 
the reproducing formula $u=TT^\ast{u}$ has another expression
\begin{equation}
u(X)
=
\frac{1}{(2\pi{h})^n}
\int_{\Gamma(X)}
e^{i\langle{X-Y,\theta}\rangle/h}
u(Y)d\Omega. 
\label{equation:reproducer}
\end{equation}
The right hand sides of 
\eqref{equation:projector} and \eqref{equation:reproducer} 
coincide to each other via the change of variables 
called the Kuranishi trick. 
The Weyl quantization of a symbol 
$a(X,\theta){\in}S_\text{W}(\Lambda_\Phi)
=(\kappa_T^{-1})^\ast{S_{\text{W}}(\mathbb{R}^{2n})}$ 
is defined by 
$$
\operatorname{Op}_h^\text{W}(a)u(X)
=
\frac{1}{(2\pi{h})^n}
\int_{\Gamma(X)}
e^{i\langle{X-Y,\theta}\rangle/h}
a\left(\frac{X+Y}{2},\theta\right)
u(Y)
d\Omega
$$
for $u{\in}T(\mathscr{S}(\mathbb{R}^n))$. 
$\operatorname{Op}_h^\text{W}(a)u$ is holomorphic in $\mathbb{C}^n$ since 
$$
\frac{\partial}{\partial\bar{X}}
e^{i\langle{X-Y,\theta}\rangle/h}
a\left(\frac{X+Y}{2},\theta\right)
=
\frac{\partial}{\partial\bar{Y}}
e^{i\langle{X-Y,\theta}\rangle/h}
a\left(\frac{X+Y}{2},\theta\right)
$$
in the sense of distribution. 
The Weyl quantization of $a\circ\kappa_T$ 
is defined by 
$$
\operatorname{Op}_h^\text{W}(a\circ\kappa_T)u(x)
=
\frac{1}{(2\pi{h})^n}
\int_{\mathbb{R}^{2n}}
e^{i\langle{x-y,\xi}\rangle/h}
a\circ\kappa\left(\frac{x+y}{2},\xi\right)
u(y)
dyd\xi 
$$
for $u\in\mathscr{S}(\mathbb{R}^n)$. 
It is remarkable that 
$\operatorname{Op}_h^\text{W}(S_\text{W}(\Lambda_\Phi))$ 
is extended on $H_\Phi$ and a subalgebra of $\mathscr{L}(H_\Phi)$, 
and the exact Egorov theorem 
\begin{equation}
\operatorname{Op}_h^\text{W}(a)\circ{T}
=
T\circ\operatorname{Op}_h^\text{W}(a\circ\kappa_T) 
\label{equation:egorov}
\end{equation}
holds for $a{\in}S_\text{W}(\Lambda_\Phi)$. 
Moreover, Guillemin discovered in \cite{guillemin} that 
$\tilde{T}_b=\operatorname{Op}_h^\text{W}(b^\prime_{1/2})$ 
for $b(X)=b(X,\bar{X})$, where 
$$
b^\prime_{1/2}(X,\theta)
=
b_{1/2}
\left(
X,
(\Phi^{\prime\prime}_{X\bar{X}})^{-1}
\left(\frac{i}{2}\theta-\Phi^{\prime\prime}_{XX}X\right)
\right),
$$
and $\{b_t\}_{t\geqslant0}$ is the heat flow of $b$ defined by 
\begin{align*}
  b_t(X)
& =
  e^{th\Delta}b(X)
\\
& =
  \frac{C_\Phi}{(th)^n}
  \int_{\mathbb{C}^n}
  e^{-2\lvert{R(X-Y)}\rvert^2/th}
  b(Y)L(dY),
\end{align*}
$$
\Delta
=
\frac{1}{2}
\left\langle
\frac{\partial}{\partial{X}},
(\Phi^{\prime\prime}_{\bar{X}X})^{-1}
\frac{\partial}{\partial\bar{X}}
\right\rangle. 
$$
$b_t$ makes sense for $b\in\mathscr{T}$ and $t\in(0,2)$. 
We use only $t\in[0,1]$ as a quantization parameter. 
$b_1$ is said to be the Berezin symbol of 
a Berezin-Toeplitz operator $\tilde{T}_b$. 
These facts show that pseudodifferential calculus 
(See e.g., 
\cite{kumano-go}, \cite{martinez} and \cite{shubin}) 
and the heat flow determined by the phase function 
play essential roles in the analysis of 
Berezin-Toeplitz operators. 
\par
Here we state our results. 
\begin{theorem}
\label{theorem:bounded}
Suppose that $b\in\mathscr{T}$. We have 
\\
{\rm (i)}\ 
If $\tilde{T}_b\in\mathscr{L}(H_\Phi)$, then for any $t\in(1/2,1]$, 
\begin{equation}
\lVert{b_t}\rVert_{L^\infty(\mathbb{C}^n)}
\leqslant
\frac{\lVert\tilde{T}_b\rVert_{\mathscr{L}(H_\Phi)}}{(2t-1)^n}.
\label{equation:sakurako}
\end{equation}
{\rm (ii)}\ 
If $b_t{\in}L^\infty(\mathbb{C}^n)$ for some $t\in[0,1/2)$, then 
$\tilde{T}_b\in\mathscr{L}(H_\Phi)$. 
\\
{\rm (iii)}\ 
Suppose that $b\in\mathscr{S}^\prime(\mathbb{C}^n)$ in addition. 
Set 
$b^\lambda(X)
=e^{i\operatorname{Re}\hspace{0.5pt}\langle{X,\lambda}\rangle}
b(X)$ 
for 
$\lambda\in\mathbb{C}^n$. 
Then, 
$b_{1/2}{\in}S_\text{W}(\mathbb{C}^n)$ 
if and only if 
\begin{equation}
\lVert{(b^\lambda)_1(\cdot)}\rVert_{L^\infty(\mathbb{C}^n)}
e^{-h\lvert{}^tR^{-1}\lambda\rvert^2/8}
\in
L^1(\mathbb{C}^n_\lambda).
\label{equation:equivalent}
\end{equation}
In this case, $\tilde{T}_b\in\mathscr{L}(H_\Phi)$.
\end{theorem}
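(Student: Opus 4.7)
For part (i), I would identify $b_1(X) = \langle\tilde{T}_b k_X, k_X\rangle / \|k_X\|^2$ as the Berezin (covariant) symbol of $\tilde{T}_b$: using $\tilde{T}_b = TT^\ast(b\,\cdot)$ and the reproducing property, the identity $\operatorname{Re}\{2\Psi(X,\bar W) - 2\Phi(W)\} = \Phi(X) - \Phi(W) - |R(X-W)|^2$ already stated in the introduction yields both the representation $b_1(X) = \langle\tilde{T}_b k_X, k_X\rangle/\|k_X\|^2$ and the diagonal bound $\|b_1\|_{L^\infty} \le \|\tilde{T}_b\|_{\mathscr{L}(H_\Phi)}$ by Cauchy--Schwarz, which settles $t=1$. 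For $t\in(1/2,1)$ I would pass to the off-diagonal Berezin form $\tilde{b}(X,\bar W) = \langle\tilde{T}_b k_W, k_X\rangle/k_W(X)$, which is holomorphic in $X$ and anti-holomorphic in $W$, reduces to $b_1$ on the diagonal, and satisfies $|\tilde{b}(X,\bar W)| \le \|\tilde{T}_b\|_{\mathscr{L}(H_\Phi)} e^{|R(X-W)|^2/h}$ (from $|k_W(X)|^2 = \|k_W\|^2\|k_X\|^2 e^{-2|R(X-W)|^2/h}$). Recovering $b_t$ amounts to realizing the formal backward heat flow $e^{-(1-t)h\Delta}b_1$ as a complex Gaussian integral of $\tilde{b}(X,\bar W)$ over a real slice $W = X + i\eta$; this converges precisely when the Gaussian decay in $\eta$ strictly dominates the exponential growth $e^{|R(X-W)|^2/h}$, which is the case exactly for $t > 1/2$, and the normalization of that Gaussian produces the prefactor $(2t-1)^{-n}$.

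For part (ii), I would invoke Guillemin's identity $\tilde{T}_b = \operatorname{Op}_h^\text{W}(b'_{1/2})$ together with Sj\"ostrand's boundedness theorem for $\operatorname{Op}_h^\text{W}$ on $S_\text{W}(\Lambda_\Phi)$. If $b_t \in L^\infty$ for some $t \in [0,1/2)$, then $b_{1/2} = e^{(1/2 - t)h\Delta}b_t$ is the Gaussian convolution of a bounded function, so every derivative $\partial^\alpha b_{1/2}$ is uniformly bounded on $\mathbb{C}^n$; hence $b_{1/2} \in \mathscr{B}^\infty(\mathbb{C}^n) \subset \mathscr{B}^{2n+1}(\mathbb{C}^n) \subset S_\text{W}(\mathbb{C}^n)$. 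Lifting through $\kappa_T$ gives $b'_{1/2} \in S_\text{W}(\Lambda_\Phi)$, and the Weyl calculus boundedness gives $\tilde{T}_b \in \mathscr{L}(H_\Phi)$.

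For part (iii), a direct Gaussian computation of $(b^\lambda)_1(X)$, followed by completing the square in $Y$, factorizes the result as $(b^\lambda)_1(X) = e^{i\operatorname{Re}\langle X,\lambda\rangle} e^{-h|{}^tR^{-1}\lambda|^2/8} G_\lambda(X)$, where $G_\lambda$ is (up to constants) the value at frequency $\lambda$ of a windowed Fourier transform of $b_{1/2}$ taken against a specific Gaussian window $\chi_0$. Taking $L^\infty_X$-norms and integrating in $\lambda$ then identifies $\|(b^\lambda)_1\|_{L^\infty} e^{-h|{}^tR^{-1}\lambda|^2/8} \in L^1(\mathbb{C}^n_\lambda)$ with $\sup_Z |\mathscr{F}[b_{1/2}\tau_Z \chi_0](\lambda)| \in L^1(\mathbb{C}^n_\lambda)$, which is the $S_\text{W}(\mathbb{C}^n)$ norm relative to the window $\chi_0$; by independence of $S_\text{W}(\mathbb{C}^n)$ from the choice of window $\chi$, this gives the equivalence. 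The boundedness $\tilde{T}_b \in \mathscr{L}(H_\Phi)$ follows exactly as in (ii), since $b_{1/2} \in S_\text{W}(\mathbb{C}^n)$ transfers to $b'_{1/2} \in S_\text{W}(\Lambda_\Phi)$ via $\kappa_T$ and hence $\operatorname{Op}_h^\text{W}(b'_{1/2}) = \tilde{T}_b$ is bounded.

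The main obstacle I anticipate is the complex Gaussian representation in (i): locating the correct real slice in $W$-space and matching the Gaussian normalization against the off-diagonal growth of $\tilde{b}(X, \bar W)$ so that the sharp constant $(2t-1)^{-n}$ emerges cleanly. Parts (ii) and the boundedness conclusion in (iii) both reduce, via Guillemin's identity, to the Sj\"ostrand Weyl-calculus boundedness summarized in the introduction, while the equivalence in (iii) is essentially a direct Fourier-integral computation together with the window-independence of $S_\text{W}$.
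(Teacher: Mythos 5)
Your proposal is correct in outline, and parts (ii) and (iii) essentially coincide with the paper: (ii) is exactly the argument the paper dismisses as obvious (heat flow from time $t<1/2$ to $1/2$ smooths $b_t\in L^\infty$ into $\mathscr{B}^\infty\subset S_\text{W}$, then Guillemin's identity plus Weyl-calculus boundedness), and your (iii) is the paper's computation viewed from the spatial side: the paper chooses the window on the frequency side, $\mathscr{F}[\chi](X)=C_1e^{-4\lvert RX\rvert^2/h}$, invokes Boulkhemair's characterization \eqref{equation:boulkhemair} of \eqref{equation:sjoestrand}, and uses the parallelogram law to produce exactly the factorization you describe; for a Gaussian window the Boulkhemair form and the windowed Fourier transform of $b_{1/2}$ agree up to a unimodular factor, so your appeal to window-independence is the same mechanism. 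Part (i) is where you genuinely diverge. The paper's proof is elementary and stays inside $H_\Phi$: it uses the Weyl operators of Lemma~\ref{theorem:weyl} to replace $b$ by $b(\cdot+X)$ without changing the operator norm, the explicit orthonormal system $u_\alpha$ of Lemma~\ref{theorem:cons} (which carries the factor $e^{\langle X,\Phi^{\prime\prime}_{XX}X\rangle/h}$, neutralizing the obstruction $\Phi^{\prime\prime}_{XX}\neq0$), the diagonal identity \eqref{equation:detroit}, and then resums the alternating series $\sum_k(-s)^k$ with $s=1/t-1\in(0,1)$ through a trace-class operator and dominated convergence, the geometric series $(1-s)^{-n}=t^n(2t-1)^{-n}$ producing the sharp constant in \eqref{equation:sakurako}. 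Your route via the polarized Berezin symbol $\tilde b(X,\bar W)$, its Cauchy--Schwarz-saturating growth bound, and the backward heat flow realized as a Gaussian integral over the complexified diagonal yields the same constant (your Gaussian ratio over $2n$ real dimensions does give $(2t-1)^{-n}$, with the threshold $t>1/2$ emerging exactly as you say) and is basis-free and conceptually transparent about why $1/2$ is critical; the paper's route avoids analytic continuation entirely and reduces all convergence questions to a geometric series.

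One step in your (i) deserves emphasis, since you only gesture at it: for mere $b\in\mathscr{T}$ with $\tilde T_b$ bounded you cannot simply ``invert'' $e^{(1-t)h\Delta}$; you must verify the slice identity $b_t(X)=c_t\int e^{-q_t(\eta)}\tilde b(X,\overline{X+i\eta}\,)\,L(d\eta)$ directly. This can be done by representing $\tilde b(X,\bar W)$ through the polarized heat kernel (the analytic continuation of $e^{-2\lvert R(X-Y)\rvert^2/h}$ in $(X,\bar W)$) and applying Fubini: the collapsed Gaussian is the time-$t$ kernel with $t\in(1/2,1)\subset(0,2)$, which is precisely the range where membership in $\mathscr{T}$ guarantees absolute convergence. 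Without this verification the representation is only formal, since the backward heat flow is not defined on bounded functions. Also, your displayed normalization $\lvert k_W(X)\rvert^2=\lVert k_W\rVert^2\lVert k_X\rVert^2e^{-2\lvert R(X-W)\rvert^2/h}$ should be stated for the unnormalized kernels $K_W(X)=(C_\Phi/h^n)e^{2\Psi(X,\bar W)/h}$ (it follows from $\operatorname{Re}\{2\Psi(X,\bar W)\}=\Phi(X)+\Phi(W)-\lvert R(X-W)\rvert^2$), but this is cosmetic and the off-diagonal bound $\lvert\tilde b(X,\bar W)\rvert\leqslant\lVert\tilde T_b\rVert_{\mathscr{L}(H_\Phi)}e^{\lvert R(X-W)\rvert^2/h}$ that you actually use is correct, including in the non-convex case $\Phi^{\prime\prime}_{XX}\neq0$, because the quadratic phase factors cancel in modulus.
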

\begin{theorem}
\label{theorem:deformation}
Suppose that 
$\partial_X^\alpha\partial_{\bar{X}}^\beta{a}, 
\partial_X^\alpha\partial_{\bar{X}}^\beta{b} 
\in
S_\text{W}(\mathbb{C}^n)$ 
for any multi-indices satisfying 
$\lvert\alpha+\beta\rvert\leqslant3$. 
Then, there exists a positive constant $C_0$ 
which is independent of $a$, $b$ and $h$, such that   
\begin{align*}
& \left\lVert
  \tilde{T}_a\circ\tilde{T}_b
  -
  \tilde{T}_{ab}
  +
  \frac{h}{2}\tilde{T}_{Q(a,b)}
  \right\rVert_{\mathscr{L}(H_\Phi)},
  \quad
  \left\lVert
  [\tilde{T}_a,\tilde{T}_b]
  -
  \frac{ih}{2}\tilde{T}_{\{a,b\}}
  \right\rVert_{\mathscr{L}(H_\Phi)}
\\
& \qquad
  \leqslant
  C_0h^2
  \sum_{\lvert\alpha+\beta\rvert\leqslant3}
  \lVert\partial_X^\alpha\partial_{\bar{X}}^\beta{a}\rVert_{S_\text{W}}
  \sum_{\lvert\mu+\nu\rvert\leqslant3}
  \lVert\partial_X^\mu\partial_{\bar{X}}^\nu{b}\rVert_{S_\text{W}}.
\end{align*}
\end{theorem}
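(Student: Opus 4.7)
The plan is to reduce everything to the Weyl pseudodifferential calculus on $\Lambda_\Phi$. By Guillemin's identity $\tilde{T}_c=\operatorname{Op}_h^{\text{W}}(c^\prime_{1/2})$ quoted in the introduction, applied with $c\in\{a,b,ab,Q(a,b)\}$, the operator to estimate becomes $\operatorname{Op}_h^{\text{W}}(\sigma)$ with
$$\sigma \;=\; a^\prime_{1/2}\,\#\,b^\prime_{1/2} \;-\; (ab)^\prime_{1/2} \;+\; \tfrac{h}{2}\,Q(a,b)^\prime_{1/2},$$
where $\#$ is the Moyal product attached to the symplectic structure on $\Lambda_\Phi$. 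Since the Weyl quantization is bounded $S_\text{W}(\Lambda_\Phi)\to\mathscr{L}(H_\Phi)$ with operator norm controlled by $\lVert\cdot\rVert_{S_\text{W}}$ uniformly in $h$ (Sj\"ostrand's theorem transported via \eqref{equation:egorov}), it suffices to prove $\lVert\sigma\rVert_{S_\text{W}}\leqslant C_0h^2 N(a)N(b)$ with $N(c):=\sum_{|\alpha+\beta|\leqslant 3}\lVert\partial_X^\alpha\partial_{\bar X}^\beta c\rVert_{S_\text{W}}$; the commutator estimate then follows by antisymmetrizing the identity in $(a,b)$ and invoking $\{a,b\}=iQ(a,b)-iQ(b,a)$.

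\textbf{Isolating two remainders.} I split $\sigma$ into a Moyal part and a heat-flow part. The standard Taylor expansion of the oscillatory composition integral yields
$$a^\prime_{1/2}\,\#\,b^\prime_{1/2} \;=\; a^\prime_{1/2}\,b^\prime_{1/2} \;+\; \frac{ih}{2}\{a^\prime_{1/2},b^\prime_{1/2}\} \;+\; h^2\,r_{\text{M}}(a,b),$$
with $r_{\text{M}}(a,b)$ an explicit bidifferential oscillatory integral involving third-order derivatives of each factor. Meanwhile, the Leibniz identity $\Delta(fg)=(\Delta f)g+f(\Delta g)+\tfrac{1}{2}[Q(f,g)+Q(g,f)]$ (which follows at once from the definition of $\Delta$) together with Duhamel's formula yields
$$(ab)_{1/2}-a_{1/2}\,b_{1/2} \;=\; \frac{h}{2}\int_0^{1/2}e^{(1/2-\tau)h\Delta}\bigl(Q(a_\tau,b_\tau)+Q(b_\tau,a_\tau)\bigr)\,d\tau,$$
and Taylor-expanding the integrand at $\tau=0$ produces the leading linear-in-$h$ term and an $h^2$-remainder $r_{\text{H}}(a,b)$ again involving third-order derivatives. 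Restricting everything to $\Lambda_\Phi$ (where $c^\prime_{1/2}$ becomes $c_{1/2}$ and, by the definition of $\{\cdot,\cdot\}$, $\{a^\prime_{1/2},b^\prime_{1/2}\}$ becomes $iQ(a_{1/2},b_{1/2})-iQ(b_{1/2},a_{1/2})$), the explicit $O(h)$-contributions from the Moyal and heat-flow expansions combine to cancel $-\tfrac{h}{2}Q(a,b)^\prime_{1/2}$ exactly, leaving $\sigma = h^2\bigl(r_{\text{M}}(a,b)+r_{\text{H}}(a,b)\bigr)$.

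\textbf{Main estimate and the main obstacle.} The control of $r_{\text{H}}$ is the easier half: $e^{th\Delta}$ is convolution with an $L^1$-normalized Gaussian and therefore acts as a contraction on $S_\text{W}(\mathbb{C}^n)$ uniformly in $h\in(0,1]$ and $t\in[0,1]$, so expanding $Q(a_\tau,b_\tau)+Q(b_\tau,a_\tau)$ into products of derivatives of $a$ and $b$ (order at most $3$ after the Taylor step) and applying the submultiplicativity of $\lVert\cdot\rVert_{S_\text{W}}$ gives $\lVert r_{\text{H}}(a,b)\rVert_{S_\text{W}}\leqslant C\,N(a)N(b)$. The main obstacle is $r_{\text{M}}$: the Moyal product is an oscillatory integral of scale $1/h$, so one must verify that Taylor-expanding its phase to second order genuinely produces an extra factor $h^2$ in the $S_\text{W}$ norm rather than being absorbed into the oscillation. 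Here I invoke the version of Sj\"ostrand's composition theorem in \cite{sjoestrand3} which says that the Moyal product is bilinear bounded on $S_\text{W}(\Lambda_\Phi)$ with constants independent of $h$; applied to the explicit integral form of $r_{\text{M}}$ after extracting the three-derivative bidifferential factors, it gives $\lVert r_{\text{M}}(a,b)\rVert_{S_\text{W}}\leqslant C\,N(a)N(b)$. Adding the two remainder bounds proves the first inequality, and the commutator estimate is then immediate by the antisymmetric combination described in the first paragraph.
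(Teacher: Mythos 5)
Your overall architecture is the paper's: reduce via Guillemin's identity $\tilde{T}_c=\operatorname{Op}_h^\text{W}(c^\prime_{1/2})$ to an $S_\text{W}$ estimate for a symbol, expand the Weyl product of the half-heat-flowed symbols to first order in $h$, compare $(ab)_{1/2}$ with $a_{1/2}b_{1/2}$ through the heat equation (your Duhamel identity is precisely the paper's approximate caloric solution $p_t$ in integrated form, and your Leibniz identity for $\Delta$ is the one the paper uses implicitly), and let the $O(h)$ contributions cancel at heat time $t=1/2$. But as written the central cancellation fails, because of a factor of $2$ in your identification of the restricted Moyal bracket. The symplectic form induced on $\Lambda_\Phi$ is $\sigma_{\mathbb{R}}\vert_{\Lambda_\Phi}=2i\sum_{j,k}\frac{\partial^2\Phi}{\partial X_j\partial\bar{X}_k}\,dX_j\wedge d\bar{X}_k$, and the Poisson bracket it induces on restricted symbols is $\frac{1}{2}\bigl(iQ(f,g)-iQ(g,f)\bigr)$, i.e.\ one \emph{half} of the bracket $\{f,g\}$ defined in the paper. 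Equivalently, integrating by parts with $Ye^{-2i\sigma/h}=\frac{h}{4}(\Phi^{\prime\prime}_{\bar{X}X})^{-1}\partial_{\bar{Z}}e^{-2i\sigma/h}$ gives the paper's expansion $a_t{\#}b_t=a_tb_t-\frac{h}{4}Q(a_t,b_t)+\frac{h}{4}Q(b_t,a_t)+h^2r_t$, with coefficient $h/4$, not the $-\frac{h}{2}Q+\frac{h}{2}Q(b,a)$ that your identification produces. Run your own bookkeeping with your constants: the Moyal step contributes $-\frac{h}{2}Q(a_{1/2},b_{1/2})+\frac{h}{2}Q(b_{1/2},a_{1/2})$, your Duhamel formula (after Taylor at $\tau=0$) contributes $-\frac{h}{4}e^{h\Delta/2}\bigl(Q(a,b)+Q(b,a)\bigr)$, and the subtracted target contributes $+\frac{h}{2}e^{h\Delta/2}Q(a,b)$; the sum is $\frac{h}{4}e^{h\Delta/2}\bigl(Q(b,a)-Q(a,b)\bigr)+O(h^2)$, a genuinely surviving first-order term since $Q(a,b)\neq Q(b,a)$ in general. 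So your remainder symbol is only $O(h)$, not $O(h^2)$. With the correct coefficient $h/4$ all three terms do cancel, which is exactly the paper's computation $c_t=e^{th\Delta}(ab)-\frac{h}{2}(\frac{1}{2}+t)e^{th\Delta}Q(a,b)+\frac{h}{2}(\frac{1}{2}-t)e^{th\Delta}Q(b,a)+\mathcal{O}(h^2)$ evaluated at $t=1/2$.

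Two further steps must be made explicit even after fixing the constant. First, your ``exact cancellation'' tacitly replaces $Q(a_{1/2},b_{1/2})$ (what the Moyal term actually produces) by $e^{h\Delta/2}Q(a,b)$ (what the Duhamel and target terms carry); these are not equal, and the estimate $Q(a_t,b_t)-e^{th\Delta}Q(a,b)\in hC([0,\infty);S_\text{W}(\mathbb{C}^n))$ is a lemma --- the paper's $H_t$ term, proved via the heat equation it satisfies --- and, together with the Taylor expansion of your Duhamel integrand in $\tau$ (the paper's $F_t$ and $G_t$ terms, which exploit the evenness of the heat kernel to kill the first-order Taylor terms), it is precisely where the hypothesis on third derivatives is consumed. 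Second, for $r_\text{M}$: bilinear boundedness of ${\#}$ on $S_\text{W}(\Lambda_\Phi)$ does not by itself bound a Taylor remainder of the oscillatory composition integral; what is needed is the $h$-uniform $S_\text{W}$-boundedness of the quadratic-phase propagator $e^{i\tau h\sigma(D)/2}$ applied to the bidifferential terms (a Boulkhemair--Sj\"ostrand result), which is presumably what you intend but should be invoked as such rather than the composition theorem. With these repairs your argument coincides with the paper's proof, the only genuine difference being cosmetic: Duhamel's formula in place of the explicitly constructed approximate solution $p_t$.
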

Here we explain the known results and the detail of our results. 
Theorem~\ref{theorem:bounded}-(i) is a refinement and a generalization of 
the results of Berger and Coburn in \cite{BC2}. 
They proved that 
$\lVert{b_t}\rVert_{L^\infty(\mathbb{C}^n)}\leqslant{C(t)}
\lVert\tilde{T}_b\rVert_{\mathscr{L}(H_\Phi)}$ 
for $t\in(1/2,1]$ with some function $C(t)$ 
in case that $H_\Phi$ is the usual Segal-Bargmann space. 
For a general $H_\Phi$, we need some ideas to avoid difficulties 
coming from $\Phi^{\prime\prime}_{XX}\ne0$. 
Theorem~\ref{theorem:bounded}-(ii) is obvious by the $L^2$-boundedness
theorem of pseudodifferential operators of order zero with smooth symbols. 
The condition \eqref{equation:equivalent} 
is a special form of the condition 
for which $b_{1/2}{\in}S_\text{W}(\mathbb{C}^n)$. 
This is given by a special choice of a Schwartz function 
$\chi$ appearing in the definition of $S_\text{W}(\mathbb{C}^n)$. 
Theorem~\ref{theorem:bounded}-(iii) seems to extend the known results 
by Berger and Coburn in \cite[Theorem~13]{BC2}, that is,  
if $b\geqslant0$ and $b_1{\in}L^\infty(\mathbb{C}^n)$, 
then $\tilde{T}_b\in\mathscr{L}(H_\Phi)$. 
\par
Theorem~\ref{theorem:deformation} reminds us of the recent interesting
results of Lerner and Morimoto in \cite{lm} on the Fefferman-Phong inequality. 
Coburn proved in \cite{coburn1} 
the deformation estimates on the usual Segal-Bargmann space 
under the assumption 
$$
a,b\in
\text{the set of all trigonometric polynomials}
+
C_0^{2n+6}(\mathbb{C}^{n}), 
$$
where 
$C_0^{2n+6}(\mathbb{C}^{n})$ 
is the set of all compactly supported $C^{2n+6}$-functions on $\mathbb{C}^n$. 
Roughly speaking, Theorem~\ref{theorem:deformation} asserts that 
the deformation estimates hold for $a,b\in\mathscr{B}^{2n+4}(\mathbb{C}^n)$. 
The relationship between Berezin-Toeplitz operators and Weyl 
pseudodifferential operators on $H_\Phi$ gives a formal identity 
$$
\tilde{T}_a\circ\tilde{T}_b
=\tilde{T}_c, 
\quad
c=e^{-h\Delta/2}(a^\prime_{1/2}{\#}b^\prime_{1/2}), 
$$
where $\#$ is the product of $S_\text{W}(\Lambda_\Phi)$ 
in the sense of the Weyl calculus introduced later.    
Unfortunately, however, the backward heat kernel $e^{-h\Delta/2}$ 
can act only on a class of real-analytic symbols, 
and it is very hard to obtain the symbol $c$. 
We apply the forward heat kernel $e^{th\Delta}$ 
to the construction of the asymptotic expansion 
of the backward heat kernel
$$
e^{-h\Delta/2}
=
1-\frac{h}{2}\Delta+\mathcal{O}(h^2), 
$$
and give an elementary proof of Theorem~\ref{theorem:deformation}.
\par
The organization of the present paper is as follows. 
In Section~\ref{section:bounded} 
we prove (i) and (iii) of Theorem~\ref{theorem:bounded}. 
In Section~\ref{section:deformation} we prove
Theorem~\ref{theorem:deformation}. 
\section{Boundedness of Berezin-Toeplitz operators}
\label{section:bounded}
In this section we prove (i) and (iii) of
Theorem~\ref{theorem:bounded}. 
On one hand, to prove (i), we express the boundedness of $\tilde{T}_b$ 
in terms of a complete orthonormal system of $H_\Phi$. 
We introduce a trace class operator defined 
by $\tilde{T}_b$ and the complete orthonornal system, 
and take its trace which becomes $b_t(X)$ for any fixed
$X\in\mathbb{C}^n$. 
This idea is basically due to Berger and Coburn in \cite{BC2}. 
In our case, however, $\Phi(X)$ is not supposed to be strictly convex, 
nor $\Phi^{\prime\prime}_{XX}$ is not supposed to vanish. 
We need to be careful about these obstructions. 
On the other hand, the proof of (iii) is a simple computation. 
We choose a Schwartz function $\chi$ as a heat kernel at the time $t=1/2$.  
\par
Here we give two lemmas used in the proof of (i). 
For $u,v{\in}H_\Phi$, the inner product
$\langle\cdot,\cdot\rangle_{H_\Phi}$ is defined by 
$$
\langle{u,v}\rangle_{H_\Phi}
=
\int_{\mathbb{C}^n}
u(X)\overline{v(X)}
e^{-2\Phi(X)/h}
L(dX), 
$$
which is the restriction of 
$\langle\cdot,\cdot\rangle_{L^2_\Phi}$ on $H_\Phi$. 
Set 
$$
u_\alpha(X)
=
\left\{
\frac{C_\Phi}{h^n}
\frac{2^{\lvert\alpha\rvert}}{\alpha!h^{\lvert\alpha\rvert}}
\right\}^{1/2}
(RX)^\alpha
e^{\langle{X,\Phi^{\prime\prime}_{XX}X}\rangle/h}
$$
for a multi-index $\alpha\in\mathbb{N}_0^n$. 
The first lemma is concerned with a complete orthonormal system of $H_\Phi$ 
which is naturally generated by the Taylor expansion of 
the reproducing kernel $e^{2\Psi(X,\bar{Y})/h}$. 
\begin{lemma}
\label{theorem:cons} 
$\{u_\alpha\}_{\alpha\in\mathbb{N}_0^n}$ 
is a complete orthonormal system of $H_\Phi$. 
\end{lemma}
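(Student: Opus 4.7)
The plan is to verify orthonormality and completeness separately, both by exploiting the linear change of variables $W=RX$ that diagonalizes the Gaussian weight.

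For orthonormality, I would compute $\langle u_\alpha,u_\beta\rangle_{H_\Phi}$ directly. The key observation is that, using the decomposition $\Phi(X)=\langle X,\Phi^{\prime\prime}_{X\bar{X}}\bar X\rangle+\operatorname{Re}\langle X,\Phi^{\prime\prime}_{XX}X\rangle$ recalled in the introduction together with $R^\ast R=\Phi^{\prime\prime}_{\bar{X}X}$, the holomorphic quadratic factor $e^{\langle X,\Phi^{\prime\prime}_{XX}X\rangle/h}$ in $u_\alpha(X)$ and its conjugate in $\overline{u_\beta(X)}$ combine with the weight $e^{-2\Phi(X)/h}$ to produce exactly $e^{-2|RX|^2/h}$. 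Changing variables to $W=RX$, with real Jacobian $|\det R|^{-2}$, the inner product reduces to the standard Gaussian moment $\int_{\mathbb{C}^n}W^\alpha\bar W^\beta e^{-2|W|^2/h}L(dW)$, which vanishes for $\alpha\ne\beta$ by angular integration in each coordinate and equals $\pi^n\alpha!(h/2)^{|\alpha|+n}$ when $\alpha=\beta$. Combining this with the normalization prefactor in $u_\alpha$ and the identities $C_\Phi=(2/\pi)^n\det\Phi^{\prime\prime}_{X\bar{X}}$ and $|\det R|^2=\det\Phi^{\prime\prime}_{\bar{X}X}=\det\Phi^{\prime\prime}_{X\bar{X}}$ cancels all factors and yields $\langle u_\alpha,u_\beta\rangle_{H_\Phi}=\delta_{\alpha\beta}$.

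For completeness, I would identify the bilinear series $\sum_\alpha u_\alpha(X)\overline{u_\alpha(Y)}$ with the reproducing kernel $K(X,Y)=(C_\Phi/h^n)e^{2\Psi(X,\bar Y)/h}$ read off from \eqref{equation:projector}. Using $\langle X,\Phi^{\prime\prime}_{X\bar{X}}\bar Y\rangle=\langle RX,\overline{RY}\rangle$ and expanding $e^{2\langle RX,\overline{RY}\rangle/h}=\sum_\alpha \frac{(2/h)^{|\alpha|}}{\alpha!}(RX)^\alpha\overline{(RY)^\alpha}$, the remaining holomorphic and antiholomorphic exponential factors in $\Psi(X,\bar Y)$ reassemble termwise into $u_\alpha(X)\overline{u_\alpha(Y)}$. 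At $X=Y$ this gives the pointwise identity $\sum_\alpha|u_\alpha(X)|^2=K(X,X)=\|K_X\|_{H_\Phi}^2$, which is Parseval's identity on each reproducing kernel $K_X$. Since the family $\{K_X\}_{X\in\mathbb{C}^n}$ has dense linear span in $H_\Phi$ (any $f\perp K_X$ for every $X$ satisfies $f(X)=\langle f,K_X\rangle=0$), Parseval extends by continuity to all of $H_\Phi$, and hence the orthonormal system is complete.

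The main bookkeeping obstacle is reconciling the symmetric bilinear pairing $\langle\cdot,\cdot\rangle$ used throughout the paper with the Hermitian structure implicit in $|RX|$; concretely, verifying the coordinate identity ${}^tR\bar R=\overline{R^\ast R}=\Phi^{\prime\prime}_{X\bar{X}}$, which is what converts $|RX|^2$ into $\langle X,\Phi^{\prime\prime}_{X\bar{X}}\bar X\rangle$ and $\langle RX,\overline{RY}\rangle$ into $\langle X,\Phi^{\prime\prime}_{X\bar{X}}\bar Y\rangle$. Once this identification is in place, both the orthonormality calculation and the reproducing kernel expansion become elementary Gaussian calculus on $\mathbb{C}^n$.
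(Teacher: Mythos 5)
Your proof is correct. Note first that the paper does not actually write out a proof of this lemma: it cites \cite[page 40, (1.63) Theorem]{folland} for the classical Segal--Bargmann case and asserts that the general case goes the same way. Your orthonormality half is essentially that computation: the identity ${}^tR\bar R=\Phi^{\prime\prime}_{X\bar X}$ (equivalently $\Phi(Y)=\lvert RY\rvert^2+\operatorname{Re}\langle Y,\Phi^{\prime\prime}_{XX}Y\rangle$, which the paper itself invokes in the proof of Theorem~\ref{theorem:bounded}-(i)) reduces the general weight to the standard Gaussian, and your moment evaluation $\pi^n\alpha!(h/2)^{\lvert\alpha\rvert+n}$ together with $\lvert\det R\rvert^2=\det\Phi^{\prime\prime}_{X\bar X}$ and the stated value of $C_\Phi$ does cancel exactly to $\delta_{\alpha\beta}$. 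Where you genuinely diverge is completeness: Folland's argument expands an arbitrary $f\in H_\Phi$ in its Taylor series and shows, using orthogonality of monomials over balls and a limiting argument, that $f\perp u_\alpha$ for all $\alpha$ kills every Taylor coefficient; you instead expand the reproducing kernel bilinearly as $\sum_\alpha u_\alpha(X)\overline{u_\alpha(Y)}$, read off Parseval's identity for each kernel section $K_X$ from the diagonal case $X=Y$ (absolute convergence of the exponential series makes the pointwise identification legitimate), and conclude via density of the span of $\{K_X\}$, since the set of vectors satisfying Parseval is a closed subspace. Both routes are standard and both work here; yours has the merit of running entirely off \eqref{equation:projector}, which the paper already establishes, and of making precise the paper's own remark that the basis is ``naturally generated by the Taylor expansion of the reproducing kernel'' --- though it does presuppose the reproducing property, i.e.\ that $TT^\ast$ is the identity on $H_\Phi$, plus the small verification $K_X\in H_\Phi$, which follows from the formula $\operatorname{Re}\{2\Psi(X,\bar Y)-2\Phi(Y)\}=\Phi(X)-\Phi(Y)-\lvert R(X-Y)\rvert^2$ given in the introduction, whereas the Folland route needs neither.
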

In case that $H_\Phi$ is the usual Segal-Bargmann space, 
the proof of Lemma~\ref{theorem:cons} is given in 
\cite[page 40, (1.63) Theorem]{folland}. 
In this case, $\{T^\ast{u_\alpha}\}_{\alpha\in\mathbb{N}_0^n}$ 
is said to be the family of Hermite functions.  
The general case can be proved in the same way, 
and we here omit the proof of Lemma~\ref{theorem:cons}. 
\par
Next lemma is concerned with the family of Weyl operators, 
which is a family of unitary operators on $H_\Phi$ 
and acts on symbols of Berezin-Toeplitz operators 
as a group of shifts on $\mathbb{C}^n$. 
The family of Weyl operators 
$\{W_\lambda\}_{\lambda\in\mathbb{C}^n}$ on $H_\Phi$ is defined by 
$$
W_\lambda{u(X)}
=
e^{[2\varphi(X,\lambda)-\varphi(\lambda,\lambda)]/h}u(X-\lambda),
$$
where 
$$
\varphi(X,\lambda)
=
\langle{X,\Phi^{\prime\prime}_{X\bar{X}}\bar{\lambda}}\rangle
+
\langle{X,\Phi^{\prime\prime}_{XX}\lambda}\rangle.
$$
We remark that $\varphi(X,\lambda)$ is holomorphic in $X$, and 
if $u$ is holomorphic, then $W_{\lambda}u$ is also. 
Properties of Weyl operators are the following. 
\begin{lemma}
\label{theorem:weyl}
We have 
\\
{\rm (i)}\ 
$W_\lambda^\ast=W_{-\lambda}$ on $H_\Phi$.
\\
{\rm (ii)}\ 
$W_\lambda^\ast{\circ}W_{\lambda}=I$ on $H_\Phi$.
\\
{\rm (iii)}\ 
$W_{\lambda}^\ast\circ\tilde{T}_b{\circ}W_{\lambda}
=\tilde{T}_{b(\cdot+\lambda)}$ 
on $H_\Phi$ for $b\in\mathscr{T}$.
\end{lemma}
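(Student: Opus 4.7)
The plan is to reduce all three claims to one quadratic cocycle identity satisfied by $\Phi$ and $\varphi$. Expanding the formulas $\Phi(X) = \langle X, \Phi^{\prime\prime}_{X\bar{X}}\bar{X}\rangle + \operatorname{Re}\langle X, \Phi^{\prime\prime}_{XX} X\rangle$ and $\varphi(X,\lambda) = \langle X, \Phi^{\prime\prime}_{X\bar{X}}\bar{\lambda}\rangle + \langle X, \Phi^{\prime\prime}_{XX}\lambda\rangle$, and using that $\Phi^{\prime\prime}_{X\bar{X}}$ is Hermitian while $\Phi^{\prime\prime}_{XX}$ is symmetric, I would first verify
\[
\Phi(Y+\lambda) - \Phi(Y) = 2\operatorname{Re}\varphi(Y,\lambda) + \Phi(\lambda),
\]
together with the auxiliary linearity facts $\varphi(X+\lambda,\lambda) = \varphi(X,\lambda) + \varphi(\lambda,\lambda)$, $\varphi(X,-\lambda) = -\varphi(X,\lambda)$, $\varphi(-\lambda,-\lambda) = \varphi(\lambda,\lambda)$, and $2\Phi(\lambda) = \varphi(\lambda,\lambda) + \overline{\varphi(\lambda,\lambda)}$.

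For (i), I would write out $\langle W_\lambda u, v\rangle_{H_\Phi}$ and make the substitution $X \mapsto X + \lambda$ in the integral; the Jacobian is unity and the exponent becomes $[2\varphi(Y+\lambda,\lambda) - \varphi(\lambda,\lambda) - 2\Phi(Y+\lambda)]/h$. Applying the cocycle identity together with the linearity of $\varphi$ in its first slot reduces this exponent to $[-2\overline{\varphi(Y,\lambda)} - \overline{\varphi(\lambda,\lambda)} - 2\Phi(Y)]/h$, which is exactly the weight that appears when $\overline{W_{-\lambda} v(Y)}$ is paired against $u(Y)e^{-2\Phi(Y)/h}$. Claim (ii) follows by direct composition: the weights of $W_{-\lambda}$ at $X$ and of $W_\lambda$ at $X+\lambda$ add to zero via the same linearity formulas, giving the identity on $H_\Phi$. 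Note that neither argument used holomorphy of $u$, so $W_\lambda$ is in fact unitary on all of $L^2_\Phi$.

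For (iii), since $W_\lambda$ preserves $H_\Phi$ (as already remarked in the text) and is unitary on $L^2_\Phi$, it also preserves $H_\Phi^\perp$, hence commutes with the orthogonal projector $TT^\ast$. Consequently, for $u \in H_\Phi$,
\[
W_\lambda^\ast \tilde{T}_b W_\lambda u = W_{-\lambda} TT^\ast(b\, W_\lambda u) = TT^\ast\bigl(W_{-\lambda}(b\, W_\lambda u)\bigr).
\]
A direct multiplication of the two exponential factors, combined once more with $\varphi(X+\lambda,\lambda) = \varphi(X,\lambda) + \varphi(\lambda,\lambda)$, yields $W_{-\lambda}(b\, W_\lambda u)(X) = b(X+\lambda)\, u(X)$; applying $TT^\ast$ then produces $\tilde{T}_{b(\cdot+\lambda)} u$.

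The main obstacle is entirely in the bookkeeping of the quadratic forms: the Hermitian symmetry of $\Phi^{\prime\prime}_{X\bar{X}}$ must be tracked carefully when separating real parts and complex conjugates in order to see that the cocycle identity holds, and the $\mathbb{R}$-linearity (not $\mathbb{C}$-linearity) of $\varphi$ in $\lambda$ must be respected when passing to $W_{-\lambda}$. Once that identity is established, (i)--(iii) are essentially algebraic manipulations of exponents.
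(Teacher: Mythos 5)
Your proof is correct, and for parts (i) and (ii) it is essentially the paper's own argument: your cocycle identity
\[
\Phi(Y+\lambda)-\Phi(Y)=2\operatorname{Re}\varphi(Y,\lambda)+\Phi(\lambda),
\]
combined with the $\mathbb{R}$-linearity facts for $\varphi$ in $\lambda$, is equivalent to the two exponent identities \eqref{equation:reiko}--\eqref{equation:nanako} that the paper verifies by direct computation, and your handling of (i) (translation $X\mapsto X+\lambda$ in the $H_\Phi$ pairing) and of (ii) (cancellation of weights via $\varphi(X+\lambda,\lambda)=\varphi(X,\lambda)+\varphi(\lambda,\lambda)$) matches the paper step for step. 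Where you genuinely diverge is (iii). The paper argues weakly: it pairs $W_\lambda^\ast\circ\tilde{T}_b\circ W_\lambda u$ against $v\in H_\Phi$, uses self-adjointness of $TT^\ast$ on $L^2_\Phi$ together with $TT^\ast W_\lambda v=W_\lambda v$, and then changes variables in the resulting $L^2_\Phi$ integral via \eqref{equation:reiko}. You instead upgrade your observation that $W_\lambda$ is unitary on all of $L^2_\Phi$ (true, and not made explicit in the paper) to the operator commutation $W_{-\lambda}\circ TT^\ast=TT^\ast\circ W_{-\lambda}$, which reduces (iii) to the pointwise conjugation $W_{-\lambda}(b\,W_\lambda u)=b(\cdot+\lambda)u$, whose exponent cancellation is the same computation as in (ii). This factorization is arguably cleaner: it separates the projector-theoretic step (a unitary mapping a closed subspace onto itself commutes with the orthogonal projection onto it) from the quadratic-form bookkeeping, and it avoids the paper's long chain of integral identities. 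One point you should state more carefully: to conclude that $W_\lambda$ preserves $H_\Phi^\perp$ you need $W_\lambda H_\Phi=H_\Phi$ (surjectivity onto the subspace), or equivalently that $W_\lambda^\ast=W_{-\lambda}$ also maps $H_\Phi$ into itself; mere invariance $W_\lambda H_\Phi\subseteq H_\Phi$ would not suffice for a general isometry. The gap closes immediately here, since $W_{-\lambda}$ preserves holomorphy and $W_\lambda W_{-\lambda}=W_{-\lambda}W_\lambda=I$, but it deserves an explicit sentence in a final write-up.
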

\begin{proof}
A direct computation shows that 
\begin{align}
  2\varphi(X+\lambda,\lambda)
  -
  \varphi(\lambda,\lambda)
  -
  2\Phi(X+\lambda)
& =
  -
  2\overline{\varphi(X+\lambda,\lambda)}
  +
  \overline{\varphi(\lambda,\lambda)}
  -
  2\Phi(X)
\label{equation:reiko}
\\
& =
  2\overline{\varphi(X,-\lambda)}
  -
  \overline{\varphi(-\lambda,-\lambda)}
  -
  2\Phi(X).
\label{equation:nanako}
\end{align}
Let $u,v{\in}H_\Phi$. 
Using a translation $X\mapsto{X+\lambda}$ and \eqref{equation:nanako}, 
we deduce 
\begin{align*}
  \langle{W_\lambda{u},v}\rangle_{H_\Phi}
& =
  \int_{\mathbb{C}^n}
  e^{[2\varphi(X,\lambda)-\varphi(\lambda,\lambda)-2\Phi(X)]/h}
  u(X-\lambda)\overline{v(X)}
  L(dX)
\\
& =
  \int_{\mathbb{C}^n}
  e^{[2\varphi(X+\lambda,\lambda)-\varphi(\lambda,\lambda)-2\Phi(X+\lambda)]/h}
  u(X)\overline{v(X+\lambda)}
  L(dX)
\\
& =
  \int_{\mathbb{C}^n}
  e^{[2\overline{\varphi(X,-\lambda)}
     -
     \overline{\varphi(-\lambda,-\lambda)}
     -
     2\Phi(X)]/h}
  u(X)\overline{v(X+\lambda)}
  L(dX)
\\
& =
  \langle{u,{W_{-\lambda}}v}\rangle_{H_\Phi},
\end{align*}
which shows that $W_{\lambda}^\ast=W_{-\lambda}$. 
\par
$W_\lambda^\ast{\circ}W_\lambda=I$ is also proved by a direct computation
\begin{align*}
  W_\lambda^\ast{\circ}W_{\lambda}u(X)
& =
  W_{-\lambda}{\circ}W_{\lambda}u
\\
& =
  e^{[2\varphi(X,-\lambda)-\varphi(-\lambda,-\lambda)]/h}(W_{\lambda}u)(X+\lambda)
\\
& =
  e^{[-2\varphi(X,\lambda)-\varphi(\lambda,\lambda)]/h}(W_{\lambda}u)(X+\lambda)
\\
& =
  e^{[-2\varphi(X,\lambda)-\varphi(\lambda,\lambda)+2\varphi(X+\lambda,\lambda)-\varphi(\lambda,\lambda)]}u(X)=u(X),
\end{align*}
since $\varphi(X+\lambda,\lambda)=\varphi(X,\lambda)+\varphi(\lambda,\lambda)$. 
\par
$TT^\ast$ is self-adjoint on $L^2_\Phi$ and 
$TT^\ast{W_{\lambda}v}=W_{\lambda}v$ for $v{\in}H_\Phi$. 
Using this and \eqref{equation:reiko}, we deduce 
\begin{align*}
&
 \langle{W_\lambda^\ast\circ\tilde{T}_b{\circ}W_{\lambda}u,v}\rangle_{H_\Phi}
\\
  =
& \langle{\tilde{T}_b{\circ}W_{\lambda}u,W_{\lambda}v}\rangle_{H_\Phi}
\\
  =
& \langle{TT^\ast(bW_{\lambda}u),W_{\lambda}v}\rangle_{H_\Phi}
\\
  =
& \langle{bW_{\lambda}u,W_{\lambda}v}\rangle_{L^2_\Phi}
\\
  =
& \int_{\mathbb{C}^n}
  b(X)
  e^{[2\varphi(X,\lambda)
        +
        2\overline{\varphi(X,\lambda)}
        -
        \varphi(\lambda,\lambda)
        -
        \overline{\varphi(\lambda,\lambda)}
        -
        2\Phi(X)]/h}
\\
& \qquad\qquad\qquad
  \times
  u(X-\lambda)
  \overline{v(X-\lambda)} 
  L(dX)
\\
  =
& \int_{\mathbb{C}^n}
  b(X+\lambda)
  e^{[2\varphi(X+\lambda,\lambda)
      +
      2\overline{\varphi(X+\lambda,\lambda)}
      -
      \varphi(\lambda,\lambda)
      -
      \overline{\varphi(\lambda,\lambda)}
      -
      2\Phi(X+\lambda)]/h}
\\
& \qquad\qquad\qquad
  \times
  u(X)
  \overline{v(X)} 
  L(dX)
\\
  =
& \int_{\mathbb{C}^n}
  b(X+\lambda)
  e^{-2\Phi(X)/h}
  u(X)
  \overline{v(X)} 
  L(dX)
\\
  =
& \langle\tilde{T}_{b(\cdot+\lambda)}u,v\rangle_{H_\Phi},
\end{align*}
which proves 
$W_{\lambda}^\ast\circ\tilde{T}_b{\circ}W_{\lambda}=\tilde{T}_{b(\cdot+\lambda)}$.  
\end{proof}
Here we prove Theorem~\ref{theorem:bounded}-(i). 
\begin{proof}[Proof of Theorem~\ref{theorem:bounded}-{\rm (i)}] 
Suppose $\tilde{T}_{b}\in\mathscr{L}(H_\Phi)$, 
and set $M=\lVert\tilde{T}_b\rVert_{\mathscr{L}(H_\Phi)}$ for short. 
Lemma~\ref{theorem:weyl} shows that 
$\tilde{T}_{b(\cdot+X)}\in\mathscr{L}(H_\Phi)$ 
and $M=\lVert\tilde{T}_{b(\cdot+X)}\rVert_{\mathscr{L}(H_\Phi)}$ 
for any $X\in\mathbb{C}^n$. 
In terms of the complete orthonormal system 
given in Lemma~\ref{theorem:cons}, 
$\tilde{T}_b\in\mathscr{L}(H_\Phi)$ implies that 
$\lvert\langle{\tilde{T}_bu_\alpha,u_\beta}\rangle_{H_\Phi}\rvert\leqslant{M}$ 
for any $\alpha,\beta\in\mathbb{N}_0^n$. 
Since 
$\Phi(Y)
=\lvert{RY}\rvert^2
+\operatorname{Re}\hspace{1pt}\langle{Y,\Phi^{\prime\prime}_{XX}Y}\rangle$, 
we deduce that for any $X\in\mathbb{C}^n$
\begin{align*}
& \langle{\tilde{T}_{b(\cdot+X)}u_\alpha,u_\beta}\rangle_{H_\Phi}
\\
  =
& \langle{TT^\ast(b(\cdot+X)u_\alpha),u_\beta}\rangle_{H_\Phi}
\\
  =
& \langle{b(\cdot+X)u_\alpha,u_\beta}\rangle_{L^2_\Phi}
\\
  =
& \frac{C_\Phi}{h^n}
  \left(\frac{1}{\alpha!\beta!}\right)^{1/2}
  \int_{\mathbb{C}^n}
  b(X+Y)
\\
& \times
  \left\{\left(\frac{2}{h}\right)^{1/2}RY\right\}^\alpha
  \left\{\left(\frac{2}{h}\right)^{1/2}\overline{RY}\right\}^\beta
  e^{-2\lvert{RX}\rvert^2/h}
  L(dY).
\end{align*}
In particular, if we take $\alpha=\beta$ 
and sum it up for $\lvert\alpha\rvert=k$,  
then we have 
\begin{align}
& \sum_{\lvert\alpha\rvert=k}
  \langle{\tilde{T}_{b(\cdot+X)}u_\alpha,u_\alpha}\rangle_{H_\Phi}
\nonumber
\\
  =
& \frac{C_\Phi}{h^n}
  \int_{\mathbb{C}^n}
  \frac{1}{k!}
  \left(\frac{2\lvert{RY}\rvert^2}{h}\right)^k
  e^{-2\lvert{RY}\rvert^2/h}
  b(X+Y)
  L(dY).
\label{equation:detroit}
\end{align}
\par
Fix $(t,X)\in(1/2,1]\times\mathbb{C}^n$. 
When $k=0$, \eqref{equation:detroit} shows that 
$\langle{\tilde{T}_{b(\cdot+X)}u_0,u_0}\rangle_{H_\Phi}=b_1(X)$, 
and 
$\lvert\langle{\tilde{T}_{b(\cdot+X)}u_0,u_0}\rangle_{H_\Phi}\rvert\leqslant{M}$ 
implies that $\lVert{b_1}\rVert_{L^\infty(\mathbb{C}^n)}\leqslant{M}$, 
which is \eqref{equation:sakurako} at $t=1$.  
We consider $(t,X)\in(1/2,1)\times\mathbb{C}^n$ below, 
and set $s=1/t-1\in(0,1)$. 
Here we introduce a trace class operator 
$$
H_{s,X}u
=
\sum_{k=0}^{\infty}
(-s)^k
\sum_{\lvert\alpha\rvert=k}
\langle{u,u_\alpha}\rangle_{H_\Phi}\tilde{T}_{b(\cdot+X)}u_\alpha
$$
for $u{\in}H_\Phi$. 
Let $K_{s,X}(Y,Z)$ be the integral kernel of $H_{s,X}$, that is, 
$$
K_{s,X}(Y,Z)
=
\sum_{k=0}^\infty
(-s)^k
\sum_{\lvert\alpha\rvert=k}
\tilde{T}_{b(\cdot+X)}u_\alpha(Y)
\overline{u_\alpha(Z)}. 
$$
It is easy to see that 
$K_{s,X}(Y,Y){\in}L^1(\mathbb{C}^n;e^{-2\Phi(Y)/h}L(dY))$ 
since 
\begin{align*}
& \sum_{k=0}^\infty
  s^k
  \sum_{\lvert\alpha\rvert=k}
  \int_{\mathbb{C}^n}
  \lvert\tilde{T}_{b(\cdot+X)}u_\alpha(Y)\rvert
  \lvert\overline{u_\alpha(Y)}\rvert
  e^{-2\Phi(Y)/h}L(dY) 
\\
  \leqslant
& M\sum_{\alpha\in\mathbb{N}_0^n}s^{\lvert\alpha\rvert}
  =
  M\left(\sum_{k=0}^\infty{s^k}\right)^n
  =
  M(1-s)^{-n}
  =
  \frac{Mt^n}{(2t-1)^n}.
\end{align*}
Then, the Lebesgue convergence theorem and 
\eqref{equation:detroit} impliy that 
\begin{align}
& t^{-n}
  \int_{\mathbb{C}^n}
  \sum_{k=0}^N
  (-s)^k
  \sum_{\lvert\alpha\rvert=k}
  \tilde{T}_{b(\cdot+X)}u_\alpha(Y)
  \overline{u_\alpha(Y)}
  e^{-2\Phi(Y)/h}L(dY)
\nonumber
\\
  =
& \frac{C_\Phi}{(th)^n}
  \int_{\mathbb{C}^n}
  \sum_{k=0}^N
  (-s)^k
  \frac{1}{k!}
  \left(\frac{2\lvert{RY}\rvert^2}{h}\right)^k
  e^{-2\lvert{RY}\rvert^2/h}
  b(X+Y)
  L(dY)
\label{equation:police}
\end{align}
converges as $N\rightarrow\infty$. 
Thus we have \eqref{equation:sakurako} for $t\in(1/2,1)$ 
since the right hand side of \eqref{equation:police} 
converges to $b_t(X)$.  
\end{proof}
Next we prove Theorem~\ref{theorem:bounded}-(iii). 
Boulkhemair proved in \cite{boulkhemair} that 
\eqref{equation:sjoestrand} is equivalent to 
\begin{equation}
\sup_{X\in\mathbb{C}^n}
\lvert\mathscr{F}^{-1}[\mathscr{F}[b]\tau_\lambda\tilde{\chi}](X)\rvert
\in
L^1(\mathbb{C}^n_\lambda)
\label{equation:boulkhemair} 
\end{equation}
with some $\tilde{\chi}\in\mathscr{S}(\mathbb{C}^n)$ 
satisfying $\int_{\mathbb{C}^n}\tilde{\chi}(X)L(dX)\ne0$, 
where $\mathscr{F}^{-1}$ is the usual inverse Fourier transform on $\mathbb{C}^n$. 
\begin{proof}[Proof of Theorem~\ref{theorem:bounded}-{\rm (iii)}] 
We compute the condition \eqref{equation:boulkhemair}. 
We choose $\mathscr{F}[\chi](X)=C_1e^{-4\lvert{RX}\rvert^2/h}$ 
which is the heat kernel at the time $t=1/2$, 
and expect a comprehensive expression coming from the parallelogram law. 
Let $X^\ast\in\mathbb{C}^n$ be the dual variable under the Fourier transform. 
We choose a constant $C_1>0$ so that 
$\chi(X^\ast)=e^{-h\lvert{{}^t\bar{R}^{-1}X^\ast}\rvert^2/16}$. 
Set $\chi_\lambda=\tau_\lambda\chi$ for short. 
The parallelogram law implies that 
\begin{align*}
  \mathscr{F}[b_{1/2}](X^\ast)\chi_{2\bar{\lambda}}(X^\ast)
& =
  e^{-h\lvert{{}^t\bar{R}^{-1}X^\ast}\rvert^2/16
     -h\lvert{{}^t\bar{R}^{-1}(X^\ast-2\bar{\lambda})}\rvert^2/16}
  \mathscr{F}[b](X^\ast)
\\
& =
  e^{-h\lvert{{}^tR^{-1}\lambda}\rvert^2/8
     -h\lvert{{}^t\bar{R}^{-1}(X^\ast-\bar{\lambda})}\rvert^2/8}
  \mathscr{F}[b](X^\ast).
\end{align*}
Taking the inverse Fourier transformation of the above, we deduce 
\begin{align*}
& \mathscr{F}^{-1}[\mathscr{F}[b_{1/2}\chi_{2\bar{\lambda}}]](X)
\\
  =
& e^{-h\lvert{{}^tR^{-1}\lambda}\rvert^2/8}
  \frac{C_\Phi}{h^n}
  \int_{\mathbb{C}^n}
  e^{i\operatorname{Re}\hspace{1pt}\langle{X-Y,\lambda}\rangle
     -2\lvert{R(X-Y)}\rvert^2/h}
  b(Y)L(dY)
\\
  =
& e^{i\operatorname{Re}\hspace{1pt}\langle{X,\lambda}\rangle
     -h\lvert{{}^tR^{-1}\lambda}\rvert^2/8}
  (b^{-\lambda})_1(X).
\end{align*}
Hence, we obtain 
$$
\sup_{X\in\mathbb{C}^n}
\lvert\mathscr{F}^{-1}[\mathscr{F}[b_{1/2}]\chi_{-2\bar{\lambda}}](X)\rvert
=
e^{-h\lvert{{}^tR^{-1}\lambda}\rvert^2/8}
\lVert{(b^{\lambda})_1}\rVert_{L^\infty(\mathbb{C}^n)}. 
$$
This completes the proof.
\end{proof}
%
%
\section{Deformation estimates for compositions}
\label{section:deformation}
Finally, we prove Theorem~\ref{theorem:deformation}. 
We first review the composition of pseudodifferential operators on $H_\Phi$. 
Let $\sigma$ be a canonical symplectic form on $\mathbb{C}^{2n}$, that is, 
$$
\sigma=d\Xi{\wedge}dX=\sum_{j=1}^nd\Xi_j{\wedge}dX_j
$$
at $(X,\Xi)\in\mathbb{C}^n\times\mathbb{C}^n$. 
Split $\sigma$ into real and imaginary parts, 
and denote $\sigma=\sigma_{\mathbb{R}}+i\sigma_I$. 
$\mathbb{R}^{2n}$ and $\Lambda_\Phi$ are 
$I$-Lagrangian and $\mathbb{R}$-symplectic. 
Indeed, this is obvious for $\mathbb{R}^{2n}$, and 
a direct computation shows that 
$\sigma_{I}\vert_{\Lambda_\Phi}=0$ and 
$$
\sigma_{\mathbb{R}}\vert_{\Lambda_\Phi}
=
2i
\sum_{j,k=1}^n
\frac{\partial^2\Phi}{\partial{X_j}\partial\bar{X}_k}
dX_j{\wedge}d\bar{X}_k
\quad\text{for}\quad
\theta=\frac{2}{i}\frac{\partial\Phi}{\partial{X}}(X),
$$
which is nondegenerate. 
We use this fact as 
$\kappa_T^\ast\sigma=\sigma_{\mathbb{R}}$ on $\mathbb{R}^{2n}$. 
\par
Let $a^\prime,b^\prime{\in}S_\text{W}(\Lambda_\Phi)$. 
It is well-known that 
$$
\operatorname{Op}_h^\text{W}(a^\prime\circ\kappa_T)
\circ
\operatorname{Op}_h^\text{W}(b^\prime\circ\kappa_T)
=
\operatorname{Op}_h^\text{W}(a^\prime\circ\kappa_T{\#}b^\prime\circ\kappa_T),
$$
\begin{align*}
  a^\prime\circ\kappa_T{\#}b^\prime\circ\kappa_T(x,\xi)
& = 
  \frac{1}{(2\pi{h})^{2n}}
  \int_{\mathbb{R}^{4n}}
  e^{-2i\sigma_{\mathbb{R}}(y,\eta;z,\zeta)/h}
\\
& \times
  a^\prime\circ\kappa_T(x+y,\xi+\eta)
  b^\prime\circ\kappa_T(x+z,\xi+\zeta)
  dyd{\eta}dzd{\zeta}.
\end{align*}
Set $\theta(X)=-2i\Phi^\prime_X(X)$ for $X\in\mathbb{C}^n$. 
Using the exact Egorov theorem \eqref{equation:egorov} 
together with the symplectic transform $\kappa_T$ 
or a direct computation, we have 
$$
\operatorname{Op}_h^\text{W}(a^\prime)
\circ
\operatorname{Op}_h^\text{W}(b^\prime)
=
\operatorname{Op}_h^\text{W}(a^\prime{\#}b^\prime),
$$
\begin{align*}
  a^\prime{\#}b^\prime(X,\theta(X))
& = 
  \left(\frac{2^nC_\Phi}{h^n}\right)^2
  \int_{\mathbb{C}^{2n}}
  e^{-2i\sigma(Y,\theta(Y);Z,\theta(Z))/h}
\\
& \times
  a^\prime(X+Y,\theta(X+Y))
  b^\prime(X+Z,\theta(X+Z))
  L(dY)L(dZ).
\end{align*}
\par
Here we begin the proof of Theorem~\ref{theorem:deformation}. 
Suppose that 
$\partial_X^\alpha\partial_{\bar{X}}^\beta{a}, 
\partial_X^\alpha\partial_{\bar{X}}^\beta{b} 
\in
S_\text{W}(\mathbb{C}^n)$ 
for any multi-indices satisfying 
$\lvert\alpha+\beta\rvert\leqslant3$. 
Set $a_t=e^{th\Delta}a$, $b_t=e^{th\Delta}b$, 
\begin{align*}
  a_{1/2}^\prime(X,\theta)
& =
  a_{1/2}
  \left(
  X,
  \frac{i}{2}(\Phi^{\prime\prime}_{X\bar{X}})^{-1}
  \left(\theta-\frac{2}{i}\Phi^{\prime\prime}_{XX}X\right)
  \right),
\\
  b_{1/2}^\prime(X,\theta)
& =
  b_{1/2}
  \left(
  X,
  \frac{i}{2}(\Phi^{\prime\prime}_{X\bar{X}})^{-1}
  \left(\theta-\frac{2}{i}\Phi^{\prime\prime}_{XX}X\right)
  \right).
\end{align*}
Then, we have 
$\tilde{T}_a\circ\tilde{T}_b=\operatorname{Op}_h^\text{W}(a_{1/2}^\prime{\#}b_{1/2}^\prime)$. 
Since $a_{1/2}^\prime(X,\theta(X))=a_{1/2}(X,\bar{X})$, 
if we write $a_t(X)=a_t(X,\bar{X})$ and $b_t(X)=b_t(X,\bar{X})$ simply, then 
$\tilde{T}_a\circ\tilde{T}_b
=\operatorname{Op}_h^\text{W}(a_{1/2}{\#}b_{1/2})$, and 
\begin{align*}
  a_t{\#}b_t(X)
& = 
  \left(\frac{2^nC_\Phi}{h^n}\right)^2
  \int_{\mathbb{C}^{2n}}
  e^{-2i\sigma(Y,\theta(Y);Z,\theta(Z))/h}
\\
& \times
  a_t(X+Y)
  b_t(X+Z)
  L(dY)L(dZ).
\end{align*}
To complete the proof of Theorem~\ref{theorem:deformation}, 
we have only to show that 
\begin{equation}
a_{1/2}{\#}b_{1/2}
\equiv
e^{h\Delta/2}(ab)-\frac{h}{2}e^{h\Delta/2}Q(a,b)
\quad\text{mod}\quad
h^2S_\text{W}(\mathbb{C}^n).
\label{equation:kiyomi} 
\end{equation}
Here we remark that 
$$
-2i\sigma(Y,\theta(Y);Z,\theta(Z))
=
4\langle{Y,\Phi^{\prime\prime}_{X\bar{X}}\bar{Z}}\rangle
-
4\langle{Z,\Phi^{\prime\prime}_{X\bar{X}}\bar{Y}}\rangle
=
8i\operatorname{Im}
\langle{Y,\Phi^{\prime\prime}_{X\bar{X}}\bar{Z}}\rangle,
$$
\begin{align*}
  Ye^{-2i\sigma(Y,\theta(Y);Z,\theta(Z))/h}
& =
  \frac{h}{4}
  (\Phi^{\prime\prime}_{\bar{X}X})^{-1}
  \frac{\partial}{\partial\bar{Z}}
  e^{-2i\sigma(Y,\theta(Y);Z,\theta(Z))/h},
\\
  \bar{Y}e^{-2i\sigma(Y,\theta(Y);Z,\theta(Z))/h}
& =
  -
  \frac{h}{4}
  (\Phi^{\prime\prime}_{X\bar{X}})^{-1}
  \frac{\partial}{\partial{Z}}
  e^{-2i\sigma(Y,\theta(Y);Z,\theta(Z))/h}.
\end{align*}
From Taylor's formula and the integration by parts we derive 
$$
a_t{\#}b_t(X)
=
a_tb_t(X)
-
\frac{h}{4}Q(a_t,b_t)(X)
+
\frac{h}{4}Q(b_t,a_t)(X)
+
h^2r_t(X;h),
$$
where $\{r_t(X;h)\}_{h\in(0,1]}$ is bounded in
$\mathscr{B}^\infty(\mathbb{C}^n)$ 
for fixed $t>0$. 
\par
We approximate the main term of $a_t{\#}b_t$ which is 
$$
c_t
=
a_tb_t
-
\frac{h}{4}Q(a_t,b_t)
+
\frac{h}{4}Q(b_t,a_t),
$$
by constructing an approximate solution to the initial value problem for
the heat equation satisfied by $c_t$. 
In other words, we construct an asymptotic solution to the transport
equation whose main term is given by the heat operator $\partial_t-h\Delta$. 
It is easy to see that 
$$
\partial_X^\alpha\partial_{\bar{X}}^\beta{a_t}, 
\partial_X^\alpha\partial_{\bar{X}}^\beta{b_t} 
\in
C([0,\infty);S_\text{W}(\mathbb{C}^n))
$$
for $\lvert\alpha+\beta\rvert\leqslant3$.  
Set $p_t=e^{th\Delta}(ab)+hp^{(1)}_t$ and 
\begin{align*}
  p^{(1)}_t
& =
  -
  \frac{1}{4}e^{th\Delta}Q(a,b)
  +
  \frac{1}{4}e^{th\Delta}Q(b,a)
\\
& -
  \frac{1}{2}
  \int_0^t
  e^{(t-s)h\Delta}
  \{Q(a_s,b_s)+Q(b_s,a_s)\}
  ds.
\end{align*}
Then, $c_t$ and $p_t$ solve 
\begin{align*}
  \left(\frac{\partial}{\partial{t}}-h\Delta\right)c_t
& =
  -
  \frac{h}{2}
  \{Q(a_t,b_t)+Q(b_t,a_t)\}
  +
  \frac{h^2}{4}
  Q_1(a_t,b_t),
\\
  c_0
& =
  ab-\frac{h}{4}Q(a,b)+\frac{h}{4}Q(b,a),  
\\
  Q_1(a,b)
& =
  \left\langle
  (\Phi_{X\bar{X}}^{\prime\prime})^{-1}
  \frac{\partial^2a}{\partial{X}^2},
  (\Phi_{\bar{X}X}^{\prime\prime})^{-1}
  \frac{\partial^2b}{\partial\bar{X}^2}
  \right\rangle,
\\
  \left(\frac{\partial}{\partial{t}}-h\Delta\right)p_t
& =
  -
  \frac{h}{2}
  \{Q(a_t,b_t)+Q(b_t,a_t)\},
\\
  p_0
& =
  ab-\frac{h}{4}Q(a,b)+\frac{h}{4}Q(b,a),  
\end{align*}
respectively. 
Hence, 
$$
c_t-p_t
=
\frac{h^2}{4}
\int_0^t
e^{(t-s)h\Delta}
Q_1(a_s,b_s)
ds
\in
h^2C([0,\infty);S_\text{W}(\mathbb{C}^n)).
$$
\par
We show that the main part of the second term in $p^{(1)}_t$ 
is $-te^{th\Delta}\{Q(a,b)+Q(b,a)\}/2$, that is,  
$$
\int_0^te^{(t-s)h\Delta}\{Q(a_s,b_s)+Q(b_s,a_s)\}ds
=
te^{th\Delta}\{Q(a,b)+Q(b,a)\}
+
\mathcal{O}(h).
$$
For this purpose, we estimate 
\begin{align*}
  \int_0^te^{(t-s)h\Delta}Q(a_s,b_s)ds
  -
  te^{th\Delta}Q(a,b)
& =
  \int_0^t
  \{e^{(t-s)h\Delta}Q(a_s,b_s)-Q(a_s,b_s)\}
  ds
\\
& +
  \int_0^t
  \{Q(a_s,b_s)-Q(a_t,b_t)\}
  ds
\\
& +
  t\{Q(a_t,b_t)-e^{th\Delta}Q(a,b)\}
\\
& =
  F_t+G_t+tH_t.
\end{align*}
\par
We here remark that the heat kernel $e^{th\Delta}$ is an even function
in the space variable. 
Combining this fact and Taylor's formula, we can obtain the desired
estimates of $F_t$ and $G_t$. 
This technique has been frequently used for approximating symbols. 
Changing the variables in the explicit formula of the heat kernel, 
we have 
\begin{align}
  F_t(X)
& =
  \int_0^t
  \frac{C_\Phi}{\{(t-s)h\}^n}
  ds
  \int_{\mathbb{C}^n}
  e^{-2\lvert{RY}\rvert^2/(t-s)h}
\nonumber
\\
& \times
  \{Q(a_s,b_s)(X+Y)-Q(a_s,b_s)(X)\}
  L(dy)
\nonumber
\\
& =
  C_\Phi\int_0^tds
  \int_{\mathbb{C}^n}
  e^{-2\lvert{RY}\rvert^2}
\nonumber
\\
& \times
  \{Q(a_s,b_s)(X+\sqrt{(t-s)h}Y)-Q(a_s,b_s)(X)\}
  L(dy).
\label{equation:maki}
\end{align}
Substituting Taylor's formula 
\begin{align*}
  Q(a_s,b_s)(X+Y)
& =
  Q(a_s,b_s)(X)
  +
  \langle{Y},\partial_XQ(a_s,b_s)(X)\rangle
\\
& +
  \langle\bar{Y},\partial_{\bar{X}}Q(a_s,b_s)(X)\rangle
  +
  Q_2(a_s,b_s)(X,Y),
\end{align*}
$$
Q_2(a_s,b_s)(X,Y)
=
\sum_{\lvert\alpha+\beta\rvert=2}
\frac{Y^\alpha\bar{Y}^\beta}{\alpha!\beta!}
\int_0^1(1-\tau)
\left(\frac{\partial^2Q(a_s,b_s)}{\partial{X^\alpha}\partial\bar{X}^\beta}\right)
(X+\tau{Y})d\tau
$$
into \eqref{equation:maki}, we have 
$$
F_t(X)
=
C_\Phi
\int_0^tds
\int_{\mathbb{C}^n}
e^{-2\lvert{RY}\rvert^2}
Q_2(a_s,b_s)(X,\sqrt{(t-s)h}Y)
L(dY),
$$
which belongs to $hC([0,\infty);S_\text{W}(\mathbb{C}^n))$. 
\par
We split $G_t$ into two parts
$$
G_t
=
\int_0^t
\{Q(a_s,b_s)-Q(a_t,b_t)\}
ds
=
\int_0^t
\{Q(a_s-a_t,b_s)+Q(a_t,b_s-b_t)\}
ds.
$$
Since 
\begin{align*}
  a_s(X)-a_t(X)
& =
  C_\Phi
  \int_{\mathbb{C}^n}
  e^{-2\lvert{RY}\rvert^2}
  \{a(X+\sqrt{sh}Y)-a(X+\sqrt{th}Y)\}
  L(dY)
\\
& =
  C_\Phi
  \int_{\mathbb{C}^n}
  e^{-2\lvert{RY}\rvert^2}
  \tilde{a}(X,(\sqrt{s}-\sqrt{t})\sqrt{h}Y)
  L(dY),
\end{align*}
$$
\tilde{a}(X,Y)
=
\sum_{\lvert\alpha+\beta\rvert=2}
\frac{Y^\alpha\bar{Y}^\beta}{\alpha!\beta!}
\int_0^1(1-\tau)
\left(\frac{\partial^2a}{\partial{X^\alpha}\partial\bar{X}^\beta}\right)
(X+\tau{Y})d\tau,
$$
we can show that $G_t{\in}hC([0,\infty);S_\text{W}(\mathbb{C}^n))$. 
\par
It follows that 
$H_t{\in}hC([0,\infty);S_\text{W}(\mathbb{C}^n))$ 
since 
$$
\left(\frac{\partial}{\partial{t}}-h\Delta\right)H_t
\in
hC([0,\infty);S_\text{W}(\mathbb{C}^n)), 
\quad
H_0=0.
$$
\par
Combining the estimates of $F_t$, $G_t$ and $H_t$, 
we have 
\begin{equation}
\int_0^t
e^{(t-s)h\Delta}
Q(a_s,b_s)ds
-
te^{th\Delta}Q(a,b)
\in
hC([0,\infty);S_\text{W}(\mathbb{C}^n)). 
\label{equation:todo} 
\end{equation}
Applying \eqref{equation:todo} to $p^{(1)}_t$, we obtain 
$$
c_t
=
e^{th\Delta}(ab)
-
\frac{h}{2}
\left(\frac{1}{2}+t\right)
e^{th\Delta}Q(a,b)
+
\frac{h}{2}
\left(\frac{1}{2}-t\right)
e^{th\Delta}Q(b,a)
+
\mathcal{O}(h^2).
$$
If we take $t=1/2$, we obtain \eqref{equation:kiyomi}. 
This completes the proof of Theorem~\ref{theorem:deformation}. 
\vspace{11pt}\\
{\bf ACKNOWLEDGEMENT}
\quad 
The author would like to express to the referee 
his sincere gratitude for valuable comments. 
In particular, the present paper improved in the presentation 
following the referee's suggestion on the logic in the first section. 

\end{document}